\theoremstyle{plain}
\theoremstyle{plain}
\newtheorem{thm}{Theorem}
\newtheorem{prop}{Proposition}
\newtheorem{lem}{Lemma}
\newtheorem{coro}{Corollary}
\theoremstyle{definition}
\newtheorem{exam}{Example}
\newtheorem{rem}{Remark}
\begin{document}
\setcounter{page}{1}

\title{ Spaces of $u\tau$-Dunford-Pettis and $u\tau$-Compact Operators on Locally Solid Vector Lattices}
\author[Nazife Erkur\c{s}un-\"Ozcan, Niyazi An{\i}l Gezer, Omid Zabeti ]{Nazife Erkur\c{s}un-\"Ozcan$^{(1)}$, Niyazi An{\i}l Gezer$^{(2)}$, Omid Zabeti$^{(3, *)}$}

\address{$^{1}$ Department of Mathematics, Faculty of Science, Hacettepe University, Ankara, 06800,Turkey.}
\email{{erkursun.ozcan@hacettepe.edu.tr}}

\address{$^{2}$ Department of Mathematics, Middle East Technical University, Ankara, 06800, Turkey.}
\email{{ngezer@metu.edu.tr }}

\address{$^{3}$ Department of Mathematics, Faculty of Mathematics, University of Sistan and Baluchestan, P.O. Box: 98135-674, Zahedan, Iran.}
\email{{o.zabeti@gmail.com}}

\subjclass[2010]{Primary: 46A40, 54A20; Secondary:  46B40, 46A03.}

\keywords{$u\tau$-convergence, $u\tau$-Dunford-Pettis operator, $u\tau$-compact operator, locally solid vector lattice.}

\date{Received: xxxxxx; Revised: yyyyyy; Accepted: zzzzzz.
\newline \indent $^{*}$ Corresponding author}

\begin{abstract}
Suppose $X$ is a locally solid vector lattice. It is known that there are several non-equivalent spaces of bounded operators on $X$. In this paper, we consider some situations under which these classes of bounded operators form  locally solid vector lattices. In addition, we generalize some notions of $uaw$-Dunford-Pettis operators and $uaw$-compact operators defined on a Banach lattice to general theme of locally solid vector lattices. With the aid of appropriate topologies, we investigate some relations between topological and lattice structures of these operators. In particular, we characterize those spaces for which these concepts of operators and the corresponding classes of bounded ones coincide.

\end{abstract}
\maketitle
\section{Introduction and preliminaries}
Recently, many papers are devoted to the concept of unbounded convergence (see \cite{DEM2, DOT, G, GTX, GX, KMT, T, Tr, Z}). The notion of $uo$-convergence was proposed firstly in \cite{N} and considered more in \cite{DM}. The concept of $un$-convergence was introduced in \cite{Tr} and further investigated in \cite{DOT, KMT}. Unbounded convergent nets in terms of weak convergence, called $uaw$-convergence, was introduced by Zabeti and investigated in \cite{Z}. All these notions are defined on Banach lattices. For other necessary terminology on vector and Banach lattice, we refer to \cite{AB1, AB}.

Throughout the present paper, unbounded topology is considered on a locally solid vector lattice. The pair $(X,\tau)$ stands for a locally solid vector lattice, whereas the pair $(Y,\tau')$ denotes a generic locally convex space. Following \cite{DEM2}, we write $x_{\alpha}\xrightarrow{u\tau}x$ for a net $(x_\alpha)$ in a locally solid vector lattice $(X,\tau)$ if $|x_\alpha -x|\wedge w\xrightarrow{\tau} 0$ for all $w\in X_{+}$; this notion was also discovered independently in \cite{T}. We say that the net $(x_{\alpha})$ is unbounded $\tau$-convergent to $x$ whenever $x_{\alpha}\xrightarrow{u\tau} x$. For more expositions on this concept and the related topics, see \cite{DEM2, T}.

In \cite{Tr1} , a spectral theory for bounded operators between topological vector spaces was developed. Various results on different classes of bounded operators on topological vector spaces were obtained. Among those bounded operators, the spaces of $nb$-bounded and $bb$-bounded operators were considered and many properties were investigated.

In prior to anything, let us recall some notions of bounded operators between topological vector spaces. Let $X$ and $Y$ be  topological vector spaces. A linear operator $T$ from $X$ into $Y$ is said to be $nb$-bounded if there is a zero neighborhood $U\subseteq X$ such that $T(U)$ is bounded in $Y$. $T$ is called $bb$-bounded if for each bounded set $B\subseteq X$, $T(B)$ is bounded. These concepts are not equivalent; more precisely, continuous operators are, in a sense,  in the middle of these notions of bounded operators, but in a normed space, these concepts have the same meaning( see \cite{Tr1,Km1,Z0} for more details on this topic).
The class of all $nb$-bounded operators on a topological vector space $X$ is denoted by $B_{n}(X)$ and is equipped with the topology of uniform convergence on some zero neighborhood, namely, a net $(S_{\alpha})$ of $nb$-bounded operators converges to zero  on some zero neighborhood $U\subseteq X$ if for any zero neighborhood $V\subseteq X$ there is an $\alpha_0$ such that $S_{\alpha}(U) \subseteq V$ for each $\alpha\geq\alpha_0$. The class of all $bb$-bounded operators on $X$ is denoted by $B_{b}(X)$ and is allocated to the topology of uniform convergence on bounded sets. Recall that a net $(S_{\alpha})$ of  $bb$-bounded operators uniformly converges to zero  on a bounded set $B\subseteq X$ if for any zero neighborhood $V \subseteq X$ there is an $\alpha_0$ with $S_{\alpha}(B) \subseteq V$ for each $\alpha\geq\alpha_0$.

The class of all continuous operators on $X$  is denoted by $B_c(X)$ and is equipped with the topology of equicontinuous convergence, namely, a net $(S_{\alpha})$ of continuous operators converges equicontinuously to zero if for each zero neighborhood $V$ there is a zero neighborhood $U$ such that for every $\varepsilon>0$ there exists an $\alpha_0$ with $S_{\alpha}(U)\subseteq \varepsilon V$ for each $\alpha\geq\alpha_0$. See \cite{Tr1} for more information on these classes of operators. In general, we have $B_n(X)\subseteq B_c(X)\subseteq B_b(X)$ and when $X$ is locally bounded, they coincide.

In the present paper, beside lattice structures for bounded operators, the concepts of $u\tau$-Dunford-Pettis and $u\tau$-compact operators are studied and some properties of them are investigated. Specially, we consider some situations under which, these concepts with the corresponding spaces of bounded operators agree. Now, we have the following.

Suppose $(X,\tau)$ is a locally solid vector lattice and $(Y,\tau')$ is a locally convex space. An operator $T\colon (X,\tau)\rightarrow (Y,\tau')$ is said to be {\bf $u\tau$-Dunford-Pettis} if for every $\tau$-bounded net $(x_{\alpha})$ in $X$, $x_{\alpha}\xrightarrow{u\tau}0$ implies $T(x_{\alpha})\xrightarrow{\tau'} 0$ in $Y$. Denote by $DP_{u\tau}(X,Y)$ the linear space generated by $u\tau$-Dunford-Pettis operators.

It is known that in topological vector space setting, we have different non-equivalent concepts for bounded operators and compact ones between them. Consequently, one can have two variant of unbounded compact operators on locally solid vector lattices.

An $nb$-bounded operator $T\colon (Y,\tau')\to (X,\tau)$ is called {\bf $nu\tau$-compact} if there exists a zero neighborhood $U\subseteq Y$ such that the set $T(U)$ is $u\tau$-relatively compact in $X.$ Denote by $K_{nu\tau}(Y,X)$ the linear space generated by $nu\tau$-compact operators from $Y$ into $X$.

A $bb$-bounded operator $T\colon (Y,\tau')\to (X,\tau)$ is said to be $bu\tau$-compact if for every bounded set $B\subseteq Y$,  the set $T(B)$ is $u\tau$-relatively compact in $X$. We denote by $K_{bu\tau}(Y,X)$, the class of all $bu\tau$-compact operators from $Y$ into $X$.

In many cases, it is useful to consider sequential versions of these operators. In details, an operator $T\colon (X,\tau)\rightarrow (Y,\tau')$ is said to be sequentially $u\tau$-Dunford-Pettis if for every $\tau$-bounded sequence $(x_n)$ in $X$, $x_n\xrightarrow{u\tau}0$ implies $T(x_n)\xrightarrow{\tau'} 0$ in $Y$. A variant of the statement is investigated in \cite{EGZ}. An $nb$-bounded operator $T\colon (Y,\tau')\to (X,\tau)$ is said to be sequentially $nu\tau$-compact if there exists a zero neighborhood $U\subseteq Y$ such that for every sequence $(x_n)$ in $U$ the sequence $(T(x_n))$ has a $u\tau$-convergent subsequence in $X$. Similarly, $bb$-bounded operator $T$ is called sequentially $bu\tau$-compact if for every sequence $(x_n)$ in $B$, in which $B$ is a bounded set in $Y$, the sequence $(T(x_n))$ has a $u\tau$-convergent subsequence in $X$; see \cite{EGZ, KMT} for more information on these notions. It can be easily seen that many results of this note can be considered for sequential case, directly. As usual, we denote by $DP_{u\tau}(X)$, $K_{nu\tau}(X)$, and $K_{bu\tau}(X)$, the space of all $u\tau$-Dunford-Pettis, $nu\tau$-compact, and $bu\tau$-compact operators on a locally solid vector lattice $X$, respectively.

This paper is devoted to investigate  $u\tau$-Dunford-Pettis and $u\tau$-compact operators on locally solid vector lattices. Unbounded absolute weak Dunford-Pettis and unbounded absolute weak compact operators are studied in \cite{EGZ}. Moreover, $p$-compactness and $up$-compactness on lattice normed spaces are investigated in \cite{AEEM2}. Some results presented in this paper are hold in the $um$-case on locally convex vector lattices; for details see \cite{DEM1}.

\begin{rem}
It is known that every order bounded operator from a Banach lattice to a normed lattice is continuous. But in general locally solid vector lattices, this may fail; suppose $X$ is $\ell_{\infty}$ with the weak topology and $Y$ is $\ell_{\infty}$ with the usual norm topology.  Consider the identity operator from $X$ into $Y$. Indeed, it is order bounded but not continuous. Also, not every order bounded operator is automatically bounded. Let $X$ be ${\Bbb R}^{\Bbb N}$, the space of all real sequences with the product topology and the coordinate-wise order. Suppose $I$ is the identity operator on $X$; clearly, it is order bounded but not $nb$-bounded. Finally, suppose $X$ is $c_{00}$ with pointwise ordering and the usual norm topology. Then the operator $T$ on $X$ which maps every $(x_n)$ into $(nx_n)$ is order bounded but certainly not $bb$-bounded.
\end{rem}

It is natural we can not expect order properties from bounded operators between topological vector spaces but there are good news if we restrict our attention to order bounded topologically bounded operators between locally solid vector lattices.

\begin{lem}\label{301}
Suppose $X$ is a Dedekind complete locally solid vector lattice with  Fatou topology. Then the following statements hold.
\begin{itemize}
\item[\em i.] {$B^{b}_{n}(X)$, the space of all order bounded $nb$-bounded operators on $X$, is a vector lattice}.
\item[\em ii.]{$B^{b}_{c}(X)$, the space of all order bounded continuous operators on $X$, is a vector lattice}.
\item[\em iii.] {$B^{b}_{b}(X)$, the space of all order bounded $bb$-bounded operators on $X$, is a vector lattice}.
\end{itemize}
\end{lem}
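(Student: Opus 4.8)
The plan is to show that each of the three spaces $B^b_n(X)$, $B^b_c(X)$, $B^b_b(X)$ is a sublattice of the Dedekind complete vector lattice $L_b(X)$ of all order bounded operators on $X$. Recall that $L_b(X)$ is a vector lattice (indeed Dedekind complete, by the Riesz–Kantorovich theorem) when $X$ is Dedekind complete, and for $0 \le T \in L_b(X)$ the modulus is given pointwise by $|T|(x) = \sup\{|Ty| : |y| \le x\}$ for $x \in X_+$. So for each class I would take an order bounded operator $T$ in the class, and check that $|T|$ — computed in $L_b(X)$ — again belongs to the class. Since each class is already a linear subspace (stated in the definitions) and is clearly closed under the ordering inherited from $L_b(X)$, establishing $T \in \text{class} \Rightarrow |T| \in \text{class}$ is exactly what is needed for it to be a sublattice, hence a vector lattice in its own right.

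First I would handle (i), the $nb$-bounded case. Let $T \in B^b_n(X)$, so there is a solid zero-neighborhood $U$ with $T(U)$ bounded. I want to show $|T|(U)$ is bounded. The key point is the Riesz–Kantorovich formula together with the Fatou property: for $x \in U_+$, the element $|T|x$ is the supremum of $\{Ty : |y| \le x\}$, and each such $y$ lies in $U$ (since $U$ is solid and $|y| \le x \in U$), so $Ty$ ranges over a subset of the bounded set $T(U)$. The Fatou topology is exactly what lets me conclude that the supremum $|T|x$ of a bounded upward-directed family stays controlled: under a Fatou topology, an increasing net that is topologically bounded and has a supremum converges to that supremum, so the supremum lies in the closed solid hull of the bounded set, which is bounded. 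Hence $|T|(U_+)$ is bounded, and by solidity and the lattice identity $|T|(x) = |T|(x^+) + |T|(x^-)$ one gets $|T|(U)$ bounded. Thus $|T| \in B^b_n(X)$.

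For (iii), the $bb$-bounded case, the argument is parallel: given $T \in B^b_b(X)$ and a bounded set $B \subseteq X$, replace $B$ by its solid hull (still bounded, since $X$ is locally solid), and for $x$ in the positive part of that hull the set $\{Ty : |y| \le x\}$ sits inside $T(\text{solid hull of } B)$, which is bounded; the Fatou property again forces $|T|x$ into a bounded set, so $|T|(B)$ is bounded and $|T| \in B^b_b(X)$. For (ii), the continuous case, I would argue that $T$ continuous and order bounded implies $|T|$ continuous: it suffices to check continuity at $0$, and for a solid zero-neighborhood $V$ with $T(U) \subseteq V$ (here I can take $V$ closed and solid, using that $X$ is locally solid and the topology has a base of closed solid neighborhoods) the Riesz–Kantorovich formula gives, for $x \in U_+$, that $|T|x = \sup\{Ty : |y|\le x\}$ is a supremum of elements of the closed solid set $V$; invoking the Fatou property as before, the supremum stays in $V$, so $|T|(U_+) \subseteq V$ and thus $|T|$ maps a zero-neighborhood into $3V$ (or a suitable dilate), giving continuity.

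The main obstacle is the step where one passes from "every $Ty$ with $|y|\le x$ lies in a bounded (resp. solid closed) set $S$" to "$\sup\{Ty : |y|\le x\}$ lies in $S$ (or in a bounded set controlled by $S$)". This is precisely where the Fatou hypothesis and Dedekind completeness enter and must be used carefully: the family $\{Ty : 0 \le y \le x\}$ need not be directed, but one repairs this by noting $|T|x$ is also the supremum of the upward-directed family of finite suprema $\bigvee_{i} T y_i$ with $0 \le y_i \le x$, each of which lies in the solid hull of $S$; a net running through these finite suprema increases to $|T|x$ and is topologically bounded, so the Fatou property (increasing topologically bounded nets with a supremum converge to it) delivers $|T|x$ in the closure of the solid hull of $S$. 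Getting the quantifiers and the directedness reduction exactly right — and confirming that "Fatou topology" is being used in the form "$0 \le x_\alpha \uparrow x$ implies $x_\alpha \xrightarrow{\tau} x$" — is the technical heart of the proof; the rest is bookkeeping with solid hulls and the decomposition $x = x^+ - x^-$.
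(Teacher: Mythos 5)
Your overall strategy is the same as the paper's: reduce everything to showing that the positive part (you use the modulus $|T|$, the paper uses $T^{+}$; these are interchangeable) of an operator in each class stays in the class, via the Riesz--Kantorovich formula, solidity of the zero neighborhood $U$ (resp.\ of the solid hull of the bounded set $B$), and the Fatou hypothesis. However, there is a genuine error in the way you invoke the Fatou hypothesis, and it sits at the exact point you yourself identify as the technical heart. You assert that ``under a Fatou topology, an increasing net that is topologically bounded and has a supremum converges to that supremum,'' and you close by proposing to confirm that Fatou means ``$0\le x_{\alpha}\uparrow x$ implies $x_{\alpha}\xrightarrow{\tau}x$.'' That is not the Fatou property; it is the monotone form of the Lebesgue (order continuity) property, which is strictly stronger and fails for Fatou topologies in general: $\ell_{\infty}$ with the supremum norm is Dedekind complete with a Fatou topology, yet the sequence $(1,\dots,1,0,0,\dots)$ increases to $\mathbf{1}$ without converging in norm. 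A Fatou topology is one admitting a base of \emph{solid, order closed} zero neighborhoods, and that is how the paper uses it: the relevant family of values increases (in order) to $T^{+}(x)$, hence order converges to it, and order closedness of $\gamma V$ then places $T^{+}(x)$ in $\gamma V$ directly --- no topological convergence is available, and no passage to a topological closure is needed. As written, your central step would fail under the stated hypotheses.

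On the directedness issue: you are right to flag that $\{Ty : |y|\le x\}$ need not be upward directed, and passing to finite suprema is the correct repair for applying order closedness; but your claim that each finite supremum $\bigvee_{i}Ty_{i}$ lies in the solid hull of the bounded set is not justified, since a solid set need not contain finite suprema of its elements (in $\mathbb{R}^{2}$ the solid set $\{(a,b): |a|+|b|\le 1\}$ contains $(1,0)$ and $(0,1)$ but not their supremum $(1,1)$). In fairness, the paper's own proof passes over this same point silently, applying order closedness of $V$ to the possibly undirected family $\{T(u): 0\le u\le x\}$, so this gap is shared rather than introduced by you. What is specific to your write-up, and what you must fix, is the mischaracterization of the Fatou property.
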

\begin{proof}
$(i)$. It suffices to prove that for an operator $T\in B^{b}_{n}(X)$, $T^+ \in B^{b}_{n}(X)$. By the Riesz-Kantorovich formula, we have
\[T^+(x)=\sup \{T(u), 0\leq u\leq x\}.\]
Choose zero neighborhood $U\subseteq X$ such that $T(U)$ is bounded. So, for arbitrary zero neighborhood $V$, there is a positive $\gamma$ with $T(U)\subseteq \gamma V$. Fix $x\in U_{+}$. Using solidness of $U$, for each positive $u\leq x$, we have $u\in U_{+}$. Furthermore, since $V$ is order closed because of the Fatou's property, $T(u)\in \gamma V$ implies that $T^{+}(x)\in \gamma V$. Thus, we see that $T^+(U)$ is also bounded.

$(ii)$. Similar to part $(i)$.

$(iii)$. It is similar to part $(i)$; just note that for a bounded set $B\subseteq X$, we can assume that $B$ is solid, otherwise, one may consider the solid hull of $B$ which is indeed bounded.
\end{proof}
The following results extend \cite[Theorem 6 and Theorem 7]{Z1} to a more general setting.
\begin{thm}\label{600}
Suppose $X$ is a Dedekind complete locally solid vector lattice with  Fatou topology. Then $B^{b}_{n}(X)$ is locally solid with respect to the uniform convergence topology on some zero neighborhood.
\end{thm}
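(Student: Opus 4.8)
The plan is to establish two things: first, that the uniform-convergence-on-a-zero-neighborhood topology on $B^b_n(X)$ is a linear (vector) topology, and second, that it admits a base of solid neighborhoods of zero, where solidness is understood with respect to the lattice order on $B^b_n(X)$ furnished by Lemma \ref{301}(i). Since $X$ is locally solid, fix a base $\mathcal{N}$ of solid $\tau$-neighborhoods of zero in $X$. For a fixed zero neighborhood $U\subseteq X$ (which we may take solid) and $V\in\mathcal{N}$, consider the set
\[
W_{U,V}=\{T\in B^b_n(X): T(U)\subseteq V\}.
\]
I would first check that as $U$ ranges over solid zero neighborhoods and $V$ over $\mathcal{N}$, these sets (together with their scalar multiples) form a neighborhood base at $0$ for a linear topology; absorbency uses that each $T\in B^b_n(X)$ sends \emph{some} zero neighborhood to a bounded set, and the balanced/convex-type axioms follow from the corresponding properties of the $V$'s in the locally solid space $X$.

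The heart of the matter is solidness: given $S,T$ with $|S|\le |T|$ in $B^b_n(X)$ and $T\in W_{U,V}$, I must produce a neighborhood of the same basic form containing $S$. By the Riesz–Kantorovich formula, for $x\in U_+$,
\[
|S|(x)=\sup\{|S(u)|: |u|\le x\}\le \sup\{|T|(u): 0\le u\le x\}=|T|(x),
\]
and the key point is to control $|T|(x)$ in terms of $T(U)$. Fix $x\in U_+$; by solidness of $U$ every $u$ with $0\le u\le x$ lies in $U_+$, so $T(u)\in T(U)\subseteq V$ for all such $u$. Taking the supremum and invoking the Fatou property of $\tau$ — which gives that $V$, a solid neighborhood, is order closed, hence absorbs order limits of its elements — we get $|T|(x)=\sup\{T(u):0\le u\le x\}\in V$ (here one runs the argument on the increasing net of finite suprema $\sup\{T(u_1),\dots,T(u_k)\}$, exactly as in the proof of Lemma \ref{301}). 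Therefore $|T|(U)\subseteq V$, and since $|S(x)|\le |S|(x)\le |T|(x)$ for $x\in U_+$ combined with solidness of $V$ in $X$, we conclude $S(U)\subseteq V$, i.e. $S\in W_{U,V}$. Passing from $U_+$ to all of $U$ uses solidness of $U$ in the standard way.

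The main obstacle I anticipate is the interchange of the operator supremum in the Riesz–Kantorovich formula with membership in $V$: a priori $T$ is only $nb$-bounded, not order continuous, so one cannot naively say $T$ preserves the relevant suprema. This is precisely why the Fatou hypothesis is essential — it is used (as in Lemma \ref{301}) to guarantee that the solid neighborhood $V$ is order closed, so that the order-limit of the net of finite partial suprema of $\{T(u):0\le u\le x\}$, which by Dedekind completeness converges in order to $T^+(x)=|T|(x)$ on the positive part and to $|T|(x)$ in general after the usual reduction, remains inside $V$. Once solidness of the base $\{W_{U,V}\}$ is secured, local solidness of $B^b_n(X)$ follows immediately, and I would close by remarking that compatibility of this topology with the linear structure has already been noted in the literature on $B_n(X)$, so only the lattice-theoretic half genuinely requires the hypotheses on $X$.
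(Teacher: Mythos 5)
Your argument is correct in substance but takes a genuinely different route from the paper. The paper does not construct solid neighborhoods at all: it proves that the lattice operation $T\mapsto T^{+}$ is (uniformly) continuous for the topology in question --- via the inequality $\sup(A)-\sup(B)\le\sup(A-B)$ applied to $\{T_{\alpha}(u):0\le u\le x\}$ and $\{T(u):0\le u\le x\}$, giving ${T_{\alpha}}^{+}(x)-T^{+}(x)\le(T_{\alpha}-T)^{+}(x)\in V$ once $(T_{\alpha}-T)(U)\subseteq V$ --- and then invokes \cite[Theorem 2.17]{AB1}, which characterizes locally solid topologies as exactly those linear topologies making the lattice operations uniformly continuous. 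You instead verify local solidness from the definition, by showing that the sets $W_{U,V}=\{T:T(U)\subseteq V\}$ are solid in the vector lattice $B^{b}_{n}(X)$ of Lemma~\ref{301}(i). Both proofs pivot on the identical key step (solid $U$, order-closed $V$ via Fatou, Dedekind completeness, and Riesz--Kantorovich give $T(U)\subseteq V\Rightarrow |T|(U_{+})\subseteq V$), so the Fatou hypothesis is used in the same way; your version is more elementary and makes the solid base explicit, while the paper's version buys robustness from the abstract characterization. In fact you are more careful than the paper on one point: you flag that the Riesz--Kantorovich supremum is not over an upward directed set and must be replaced by the increasing net of finite suprema before order closedness of $V$ can be applied, a subtlety the paper passes over silently in both Lemma~\ref{301} and Theorem~\ref{600}.

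One caveat on your first step: for a \emph{fixed} zero neighborhood $U$, the set $W_{U,V}$ need not be absorbing in $B^{b}_{n}(X)$, since an $nb$-bounded operator $T$ only maps \emph{some} zero neighborhood to a bounded set, and $T(U)$ may fail to be bounded for the particular $U$ chosen. The ``topology of uniform convergence on some zero neighborhood'' is really an inductive-limit-type convergence structure rather than one generated by the single family $\{W_{U,V}\}$, which is precisely why the paper phrases everything in terms of convergent nets and the uniform-continuity criterion. Your argument still goes through --- each $W_{U,V}$ is solid, so for each fixed $U$ the topology of uniform convergence on $U$ has a solid base, and the solidness computation is exactly what is needed --- but the sentence asserting that these sets form a neighborhood base for a linear topology should be weakened accordingly.
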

\begin{proof}
Let $T\in B^{b}_{n}(X)$ and $x\in X_{+}$. By the Riesz-Kantorovich theorem, we have
 \[T^{+}(x)=\sup\{T(u), 0\leq  u\leq x\}.\]
   Now, suppose $(T_{\alpha})$ is a  net of order bounded $nb$-bounded operators that converges uniformly on some zero neighborhood $U\subseteq X$ to the linear operator $T$ in $ B^{b}_{n}(X)$. Choose arbitrary zero neighborhood $V\subseteq X$. Fix $x\in U_{+}$. Note that for every element $0\leq u\leq x$ we have $u\in U_{+}$ since $U$ is solid. Recall that for two subsets $A,B$ in a vector lattice, we have $\sup(A)-\sup(B)\le\sup(A-B)$. Thus,
 \[\sup\{T_{\alpha}(u): u\in X_{+}, u\leq x\}-\sup\{T(u): u\in X_{+}, u\leq x\}\]
 \[\le\sup\{(T_{\alpha}-T)(u):u\in X_{+}, u\leq x\}.\]
 There exists an $\alpha_0$ such that $(T_{\alpha}-T)(U)\subseteq V$ for each $\alpha\geq\alpha_0$.
Therefore, from order closedness of zero neighborhood $V$, we obtain
\[{T_{\alpha}}^{+}(x)-T^{+}(x)\leq(T_{\alpha}-T)^{+}(x)\in V.\]

Now, using \cite[Theorem 2.17]{AB1}, yields the desired result.
\end{proof}
\begin{thm}
Suppose $X$ is a Dedekind complete locally solid vector lattice with Fatou topology. Then $B^{b}_{c}(X)$ is locally solid with respect to the equicontinuous convergence topology.
\end{thm}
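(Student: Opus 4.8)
The plan is to mimic the structure of the proof of Theorem~\ref{600}, replacing the uniform-on-a-zero-neighborhood topology by the equicontinuous convergence topology and using Lemma~\ref{301}(ii) to guarantee that $B^{b}_{c}(X)$ is already a vector lattice. By \cite[Theorem 2.17]{AB1}, to prove that $B^{b}_{c}(X)$ is locally solid it suffices to show that whenever a net $(T_{\alpha})$ in $B^{b}_{c}(X)$ converges equicontinuously to some $T\in B^{b}_{c}(X)$, the net $(|T_{\alpha}-T|)$ also converges equicontinuously to $0$; equivalently, by translating, it is enough to show that if $T_{\alpha}\to 0$ equicontinuously then $|T_{\alpha}|\to 0$ equicontinuously.

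First I would fix a zero neighborhood $V\subseteq X$. Since $X$ has the Fatou property, we may assume $V$ is order closed and solid. By definition of equicontinuous convergence of $(T_{\alpha})$ to $0$, there is a zero neighborhood $U\subseteq X$ such that for every $\varepsilon>0$ there exists $\alpha_{0}$ with $T_{\alpha}(U)\subseteq \varepsilon V$ for all $\alpha\geq\alpha_{0}$; by solidness of $U$ we may take $U$ solid as well. Next, fix $x\in U_{+}$ and an arbitrary $0\le u\le x$; solidness of $U$ gives $u\in U_{+}$, so $T_{\alpha}(u)\in\varepsilon V$ for $\alpha\ge\alpha_{0}$. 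Using the Riesz--Kantorovich formula $T_{\alpha}^{+}(x)=\sup\{T_{\alpha}(u):0\le u\le x\}$ together with the inequality $\sup(A)-\sup(B)\le\sup(A-B)$ exactly as in the previous theorem, and invoking order closedness of $\varepsilon V$, one gets $T_{\alpha}^{+}(x)\in\varepsilon V$, and likewise $T_{\alpha}^{-}(x)\in\varepsilon V$, hence $|T_{\alpha}|(x)\in$ a fixed multiple of $\varepsilon V$ for all $x\in U_{+}$ and all $\alpha\ge\alpha_{0}$. Absorbing the constant into $\varepsilon$ (rescaling the neighborhood), this says precisely that $|T_{\alpha}|\to 0$ equicontinuously, and then \cite[Theorem 2.17]{AB1} finishes the argument.

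The one point requiring genuine care—and the main obstacle—is bookkeeping the quantifier order in the definition of equicontinuous convergence: the zero neighborhood $U$ must be chosen \emph{before} $\varepsilon$, and it must be the \emph{same} $U$ that witnesses equicontinuous convergence of $|T_{\alpha}|$. This works because the $U$ produced by the hypothesis does not depend on $\varepsilon$, and the passage from $T_{\alpha}$ to $T_{\alpha}^{\pm}$ via Riesz--Kantorovich does not shrink $U$; only the target neighborhood is scaled, and order closedness of the Fatou neighborhoods lets the supremum stay inside the scaled neighborhood. A secondary routine check is that $|T_{\alpha}|$ is again order bounded and continuous, which is exactly the content of Lemma~\ref{301}(ii), so no new work is needed there.
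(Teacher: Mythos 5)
Your proposal is correct and follows essentially the same route as the paper's proof: the paper likewise applies the Riesz--Kantorovich formula together with the inequality $\sup(A)-\sup(B)\le\sup(A-B)$ on a solid zero neighborhood $U$ witnessing equicontinuous convergence, and uses order closedness of the Fatou neighborhood $V$ to conclude $T_{\alpha}^{+}(x)-T^{+}(x)\le (T_{\alpha}-T)^{+}(x)\in\varepsilon V$, finishing via \cite[Theorem 2.17]{AB1}. The only cosmetic difference is that you translate to the case $T=0$ and treat $T_{\alpha}^{-}$ and $|T_{\alpha}|$ explicitly, whereas the paper works directly with the difference $T_{\alpha}-T$.
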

\begin{proof}
The proof is similar to what we had in Theorem \ref{600}.
Let $T\in B^{b}_{c}(X)$ and $x\in X_{+}$. By the Riesz-Kantorovich theorem, we have
 \[T^{+}(x)=\sup\{T(u), 0\leq  u\leq x\}.\]
   Now, suppose $(T_{\alpha})$ is a  net of order bounded continuous operators which is convergent equicontinuously to the linear operator $T$ in $ B^{b}_{c}(X)$. Choose arbitrary zero neighborhood $V\subseteq X$. There exists zero neighborhood $U\subseteq X$ such that for each $\varepsilon>0$ we have $(T_{\alpha}-T)(U)\subseteq \varepsilon V$ for sufficiently large $\alpha$. Fix $x\in U_{+}$. Since $U$ is solid, every positive element dominated by $x$, lies in $U_{+}$. Recall that for two subsets $A,B$ in a vector lattice, we have $\sup(A)-\sup(B)\le\sup(A-B)$. Thus,
 \[\sup\{T_{\alpha}(u): u\in X_{+}, u\leq x\}-\sup\{T(u): u\in X_{+}, u\leq x\}\]
 \[\le\sup\{(T_{\alpha}-T)(u):u\in X_{+}, u\leq x\}.\]
Therefore, the following inequality happens since zero neighborhood $V$ has the Fatou's property by the assumption:
\[{T_{\alpha}}^{+}(x)-T^{+}(x)\leq(T_{\alpha}-T)^{+}(x)\in \varepsilon V.\]
\end{proof}
\begin{thm}
Suppose $X$ is a Dedekind complete locally solid vector lattice with Fatou topology. Then $B^{b}_{b}(X)$ is locally solid with respect to the uniform convergence topology on bounded sets.
\end{thm}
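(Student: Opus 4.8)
The plan is to repeat the proof of Theorem~\ref{600} almost verbatim, the only substantive change being that uniform convergence is now measured on an arbitrary bounded set $B\subseteq X$ rather than on a fixed zero neighborhood, preceded by the reduction already exploited in the proof of Lemma~\ref{301}(iii): since the solid hull of a bounded subset of a locally solid vector lattice is again bounded, a net that converges uniformly on every bounded set in particular converges uniformly on the solid hull of $B$, so there is no loss of generality in assuming $B$ itself is solid. By Lemma~\ref{301}(iii) the space $B^{b}_{b}(X)$ is already a vector lattice, so all that remains is to check that $T\mapsto T^{+}$ respects the uniform-convergence-on-bounded-sets topology, after which \cite[Theorem 2.17]{AB1} delivers local solidity.

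Concretely, I would take $T\in B^{b}_{b}(X)$, a net $(T_{\alpha})$ of order bounded $bb$-bounded operators converging to $T$ uniformly on bounded sets, a solid bounded set $B\subseteq X$, and an arbitrary zero neighborhood $V\subseteq X$. Choosing $\alpha_{0}$ with $(T_{\alpha}-T)(B)\subseteq V$ for all $\alpha\geq\alpha_{0}$ and fixing $x\in B_{+}$, solidness of $B$ yields $u\in B_{+}$, hence $(T_{\alpha}-T)(u)\in V$, for every $u$ with $0\leq u\leq x$. Applying the Riesz-Kantorovich formula to $T_{\alpha}$ and to $T$, together with the inequality $\sup(A)-\sup(B)\leq\sup(A-B)$ valid for subsets of a vector lattice, gives
\[
T_{\alpha}^{+}(x)-T^{+}(x)\leq\sup\{(T_{\alpha}-T)(u): u\in X_{+},\ u\leq x\}=(T_{\alpha}-T)^{+}(x).
\]
Since the topology is Fatou, $V$ may be chosen order closed, and then, exactly as in Theorem~\ref{600}, the order closedness of $V$ forces $(T_{\alpha}-T)^{+}(x)\in V$; thus $T_{\alpha}^{+}(x)-T^{+}(x)\in V$ for every $x\in B_{+}$ and every $\alpha\geq\alpha_{0}$. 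An appeal to \cite[Theorem 2.17]{AB1} then finishes the proof.

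The only genuinely new ingredient compared with Theorem~\ref{600} is the opening passage to the solid hull of $B$, and this is harmless precisely because solid hulls of bounded sets are bounded; I expect no real obstacle beyond that. As in the two preceding theorems, Dedekind completeness and the Fatou property enter only to guarantee, via Lemma~\ref{301}(iii), that $T^{+}$ is again an order bounded $bb$-bounded operator and that $V$ can be taken order closed; applying the same estimate to $V\cap(-V)$ and to $x^{+},x^{-}\leq|x|$ upgrades the bound from $B_{+}$ to all of $B$ if desired, but this is not needed for the appeal to \cite[Theorem 2.17]{AB1}.
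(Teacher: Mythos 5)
Your proposal is correct and follows essentially the same route as the paper's own proof: reduction to a solid bounded set via the solid hull (as in Lemma~\ref{301}(iii)), the Riesz--Kantorovich formula combined with the inequality $\sup(A)-\sup(C)\le\sup(A-C)$, order closedness of the zero neighborhood $V$ from the Fatou property, and the final appeal to \cite[Theorem 2.17]{AB1}. No substantive differences.
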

\begin{proof}
Let $T\in B^{b}_{b}(X)$ and $x\in X_{+}$. By the Riesz-Kantorovich theorem, we have
 \[T^{+}(x)=\sup\{T(u), 0\leq  u\leq x\}.\]
   Now, suppose $(T_{\alpha})$ is a  net of order bounded $bb$-bounded operators that converges uniformly on bounded sets to the linear operator $T$ in $ B^{b}_{b}(X)$. Fix bounded set $B\subseteq X$. In view of Lemma \ref{301}$(iii)$, we can assume that $B$ is solid. Choose arbitrary zero neighborhood $V\subseteq X$. Fix $x\in B_{+}$. Recall that for two subsets $A,C$ in a vector lattice, we have $\sup(A)-\sup(C)\le\sup(A-C)$. Thus,
 \[\sup\{T_{\alpha}(u): u\in X_{+}, u\leq x\}-\sup\{T(u): u\in X_{+}, u\leq x\}\]
 \[\le\sup\{(T_{\alpha}-T)(u):u\in X_{+}, u\leq x\}.\]
 There exists an $\alpha_0$ such that $(T_{\alpha}-T)(B)\subseteq V$ for each $\alpha\geq\alpha_0$.
Consider this point that $B$ is solid and $V$ is order closed by the hypothesis, so that
\[{T_{\alpha}}^{+}(x)-T^{+}(x)\leq(T_{\alpha}-T)^{+}(x)\in V.\]
\end{proof}
\begin{rem}
One may verify that $K_n(X)\subseteq K_{nu\tau}(X)\subseteq B_n(X)$ and $K_b(X)\subseteq K_{bu\tau}(X)\subseteq B_b(X)$, where $K_n(X)$ and $K_b(X)$ stand for the class of all $n$-compact ( $b$-compact) operators on locally solid vector lattice $X$ ( see \cite{Km1} for some observation on this topic). If $X$ has the Heine-Borel property by \cite[Proposition 2.5 and Remark 2.6]{Km1} these concepts agree.
\end{rem}
Now, we consider some ideal properties for these spaces of operators.
\begin{prop} \label{101}
Let $S\colon (X,\tau)\rightarrow (Y,\tau')$ and $T\colon (Y,\tau')\rightarrow (Z,\tau'')$  be two operators between locally solid vector lattices.
\begin{itemize}
\item[\em i.] {If $T$ is $nu\tau$-compact  and $S$ is $nb$-bounded then $TS$ is $nu\tau$-compact}.
\item[\em ii.]{If $T$ is $bu\tau$-compact  and $S$ is $bb$-bounded then $TS$ is $bu\tau$-compact}.
\item[\em iii.] {If $T$ is a $u\tau$-Dunford-Pettis operator and $S$ is $bu\tau$-compact then $TS$ is $b$-compact}.
\item[\em iv.]{ If $T$ is continuous and $S$ is $u\tau$-Dunford-Pettis, then $TS$ is $u\tau$-Dunford-Pettis}.
\end{itemize}
\end{prop}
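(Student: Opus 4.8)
The plan is to verify each of the four statements by unwinding the relevant definitions. Parts (i), (ii) and (iv) are routine composition arguments, while (iii) is the one requiring a genuine (if short) argument. Throughout I will freely use three elementary facts: (a) compositions of $nb$-bounded (resp. $bb$-bounded) operators are again of that type; (b) an $nb$-bounded operator, and in particular an $nu\tau$-compact one, is continuous, hence maps $\tau$-bounded sets to $\tau'$-bounded sets; and (c) $u\tau$-relative compactness is stable under multiplication by a fixed scalar and under passing to subsets, since the dilation $x\mapsto\lambda x$ is a $u\tau$-homeomorphism and closed subsets of $u\tau$-compact sets are $u\tau$-compact.

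For (i), I would fix a zero neighbourhood $U\subseteq X$ with $S(U)$ $\tau'$-bounded (as $S$ is $nb$-bounded), and a zero neighbourhood $W\subseteq Y$ with $T(W)$ simultaneously $\tau''$-bounded and $u\tau$-relatively compact in $Z$ (intersect the two neighbourhoods witnessing that $T$ is $nb$-bounded and that $T$ is $nu\tau$-compact, using (c)). Since $S(U)$ is $\tau'$-bounded, $S(U)\subseteq\lambda W$ for some $\lambda>0$, so $TS(U)\subseteq\lambda T(W)$, which is again $\tau''$-bounded and $u\tau$-relatively compact by (c); hence $U$ witnesses that $TS$ is $nu\tau$-compact. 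Part (ii) is the bounded-set analogue and is even easier: for $\tau$-bounded $B\subseteq X$, the set $S(B)$ is $\tau'$-bounded because $S$ is $bb$-bounded, and then $TS(B)=T(S(B))$ is $\tau''$-bounded and $u\tau$-relatively compact because $T$ is $bu\tau$-compact (hence $bb$-bounded); so $TS$ is $bu\tau$-compact.

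For (iii), let $B\subseteq X$ be $\tau$-bounded. Since $S$ is $bu\tau$-compact, $S(B)$ is $u\tau$-relatively compact in $Y$, and since $bu\tau$-compactness entails $bb$-boundedness, $S(B)$ is also $\tau'$-bounded. I claim $TS(B)$ is $\tau''$-relatively compact in $Z$; this is exactly $b$-compactness of $TS$, since relative compactness already forces $\tau''$-boundedness, so no separate verification that $TS$ is $bb$-bounded is needed. Take a net $\bigl(TS(x_\alpha)\bigr)$ with each $x_\alpha\in B$. By $u\tau$-relative compactness of $S(B)$ I may pass to a subnet along which $S(x_\alpha)\xrightarrow{u\tau}y$ for some $y\in Y$. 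Then $S(x_\alpha)-y\xrightarrow{u\tau}0$, and the net $\bigl(S(x_\alpha)-y\bigr)$ is $\tau'$-bounded, being the sum of the $\tau'$-bounded net $\bigl(S(x_\alpha)\bigr)$ and a constant; so the $u\tau$-Dunford-Pettis property of $T$ yields $T\bigl(S(x_\alpha)-y\bigr)\xrightarrow{\tau''}0$, i.e. $TS(x_\alpha)\xrightarrow{\tau''}Ty$. Thus every net in $TS(B)$ has a $\tau''$-convergent subnet, so $TS(B)$ is $\tau''$-relatively compact. The step to watch is precisely this one: $u\tau$-convergence of $S(x_\alpha)$ alone does not let one invoke the Dunford-Pettis hypothesis, and it is genuinely essential that $S(B)$ --- hence the net $S(x_\alpha)-y$ --- be $\tau'$-bounded; this is why the hypothesis "$S$ is $bu\tau$-compact" is used rather than merely "$S$ sends bounded sets to $u\tau$-relatively compact sets."

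Finally, (iv) is immediate: if $(x_\alpha)$ is a $\tau$-bounded net in $X$ with $x_\alpha\xrightarrow{u\tau}0$, then $S(x_\alpha)\xrightarrow{\tau'}0$ because $S$ is $u\tau$-Dunford-Pettis, and then $TS(x_\alpha)\xrightarrow{\tau''}0$ by continuity of $T$; this is exactly the defining property of a $u\tau$-Dunford-Pettis operator $TS\colon(X,\tau)\to(Z,\tau'')$. So the only part of the proposition with any real content is (iii), and the obstacle there is the two-topology bookkeeping described above rather than anything deep.
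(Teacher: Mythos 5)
Your proof is correct and follows essentially the same route as the paper's for all four parts: the same scaling argument $S(U)\subseteq\gamma W$ in (i), the same one-line composition in (ii) and (iv), and the same subnet argument in (iii). The only difference is that in (iii) you explicitly verify that the net $S(x_\alpha)-y$ is $\tau'$-bounded before invoking the $u\tau$-Dunford--Pettis hypothesis --- a step the paper's proof silently skips (and where it also miswrites the limit as $T(S(y))$ rather than $T(y)$) --- so your version is, if anything, the more complete one.
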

\begin{proof}
$(i)$. Suppose $U\subseteq X$ and $V\subseteq Y$ are zero neighborhoods such that $S(U)$ is bounded in $Y$ and $T(V)$ is $u\tau''$-relatively compact in $Z$. There is some positive $\gamma$ with $S(U)\subseteq \gamma V$, so that $TS(U)\subseteq \gamma T(V)$. This implies that $TS(U)$ is $u\tau''$-relatively compact.

$(ii)$. Fix bounded set $B\subseteq X$. Since $S(B)$ is bounded in $Y$, by assumption, $TS(B)$ is $u\tau''$-relatively compact in $Z$.

$(iii)$. Suppose $(x_{\alpha})$ is a bounded net in $X$. There is a subnet $(y_{\beta})$ such that $S(y_{\beta})\xrightarrow{u\tau'}y$ for some $y\in Y$. Thus, by the hypothesis, $T(S(y_{\beta}))\xrightarrow
{\tau''} T(S(y))$, as desired.

$(iv)$. Suppose $(x_{\alpha})$ is a bounded $u\tau$-null net in $X$. By the assumption, $S(x_{\alpha})\xrightarrow{\tau'}0$. By the assumption, $T(S(x_{\alpha}))$ is topologically null.

\end{proof}

In this part, we investigate some conditions for which these spaces of operators agree.

Suppose $(X,\tau)$ is a locally solid vector lattice in which every convergent net is eventually bounded. This property almost happens in many known spaces; for example metrizable spaces, normed spaces, weak topology, and specially, when we can consider sequences. In this case, one may verify easily that every $u\tau$-Dunford-Pettis operator is automatically continuous but the converse is not true, in general; consider the identity operator on $\ell_1$ when it is equipped with the norm topology. From now on, while we are dealing with $u\tau$-Dunford-Pettis operators and continuous ones, together, we assume that the mentioned topology has this mild property. Recall that a topology $\tau$ on a locally solid vector lattice $X$ is said to be unbounded if $\tau=u\tau$ ( see \cite{T}, Definition 2.7). In this step, we consider a notion named "{\bf boundedly unbounded}" for a locally solid topology $\tau$. We say that a locally solid topology $\tau$ on a vector lattice $X$ is boundedly unbounded if $\tau=u\tau$ in every bounded subset of $X$. Note that boundedly unboundedness and unboundedness differ in general. Consider $X=c_0$ and $\tau=|\sigma|(X,X^*)$. Using \cite[Theorem 7]{Z}, we conclude that $\tau$ is boundedly unbounded but not unbounded since the sequence $(a_n)\subseteq X$ defined via $a_n=(0,\ldots,0,n,0,\ldots)$ where $n$ is in the $n^{th}$-place is $u\tau$-null but not $\tau$-null. In fact, it can be seen using \cite[Theorem 7]{Z} that absolute weak topology on a Banach lattice $X$ is boundedly unbounded if and only if $X^*$ is order continuous. Now, we have the following.
\begin{thm}
$DP_{u\tau}(X)=B_c(X)$ if and only if $X$ is boundedly unbounded.
\end{thm}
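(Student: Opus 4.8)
The plan is to prove both implications using the equivalence of the topologies $\tau$ and $u\tau$ on bounded sets. Recall that under the standing assumption (every $\tau$-convergent net is eventually bounded), every $u\tau$-Dunford-Pettis operator on $X$ is continuous, so the inclusion $DP_{u\tau}(X)\subseteq B_c(X)$ always holds; the content of the theorem is exactly the reverse inclusion.

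For the ``if'' direction, assume $\tau$ is boundedly unbounded, i.e.\ $\tau = u\tau$ on every $\tau$-bounded subset of $X$. Let $T\in B_c(X)$ and let $(x_\alpha)$ be a $\tau$-bounded net with $x_\alpha \xrightarrow{u\tau} 0$. Since all the $x_\alpha$ eventually lie in a fixed $\tau$-bounded set $B$, and since $u\tau$ and $\tau$ agree on $B$ by hypothesis, we get $x_\alpha \xrightarrow{\tau} 0$. Continuity of $T$ then yields $T(x_\alpha)\xrightarrow{\tau} 0$, so $T$ is $u\tau$-Dunford-Pettis. Hence $B_c(X)\subseteq DP_{u\tau}(X)$, and equality follows.

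For the ``only if'' direction, suppose $DP_{u\tau}(X) = B_c(X)$. In particular the identity operator $I$ on $X$, being continuous, is $u\tau$-Dunford-Pettis. Now take any $\tau$-bounded net $(x_\alpha)$ with $x_\alpha \xrightarrow{u\tau} 0$; applying the $u\tau$-Dunford-Pettis property of $I$ gives $I(x_\alpha) = x_\alpha \xrightarrow{\tau} 0$. Since conversely $\tau$-convergence of a bounded net always implies $u\tau$-convergence, we conclude that on every $\tau$-bounded set the topologies $\tau$ and $u\tau$ coincide, which is precisely the statement that $\tau$ is boundedly unbounded.

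The argument is largely formal once the standing hypothesis is in place; the only real subtlety, and the step I would be most careful about, is the use of the ``eventually bounded'' assumption to ensure that a $\tau$-bounded net indeed has a tail contained in a single $\tau$-bounded set on which the identification $\tau = u\tau$ can be invoked, and to guarantee that $B_c(X)\subseteq DP_{u\tau}(X)$ is even meaningful (so that $B_c(X)$ consists of operators for which the $u\tau$-Dunford-Pettis condition can be tested against bounded nets). One should also note that the definition of ``boundedly unbounded'' quantifies over all bounded subsets, so it suffices to check the equality of the two topologies net-wise, which is exactly what the identity operator provides.
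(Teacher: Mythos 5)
Your proposal is correct and follows essentially the same route as the paper: both directions hinge on the observation that $X$ is boundedly unbounded exactly when the identity operator is $u\tau$-Dunford--Pettis, with the inclusion $DP_{u\tau}(X)\subseteq B_c(X)$ supplied by the standing ``eventually bounded'' hypothesis. The only cosmetic difference is that you verify directly that each continuous $T$ is $u\tau$-Dunford--Pettis, whereas the paper packages the same computation as an appeal to Proposition \ref{101}$(iv)$ applied to $T=T\circ I$.
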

\begin{proof}
Suppose $X$ is boundedly unbounded. Therefore, every bounded $u\tau$-null net is $\tau$-null. This means that the identity operator is $u\tau$-Dunford-Pettis. So, by Proposition \ref{101}$(iv)$, we see that $DP_{u\tau}(X)=B_c(X)$. Now, suppose $DP_{u\tau}(X)=B_c(X)$. Therefore, the identity operator lies in $DP_{u\tau}(X)$. So, $X$ is boundedly unbounded.
\end{proof}
When we focus on norm topology, we obtain more familiar result.
\begin{prop}
$DP_{un}(X)=B(X)$ if and only if $X$ has a strong unit.
\end{prop}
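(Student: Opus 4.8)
The plan is to deduce this from the theorem immediately above, together with an analysis of when the $un$-topology agrees with the norm topology on norm-bounded sets. In a normed lattice the three classes $B_n(X),B_c(X),B_b(X)$ all coincide with the usual space $B(X)$ of bounded operators (and the three operator topologies coincide), and $u\tau=un$ when $\tau$ is the norm topology; so the preceding theorem gives that $DP_{un}(X)=B(X)$ exactly when $X$ is boundedly unbounded for the norm, i.e. when every norm-bounded $un$-null net in $X$ is norm-null --- equivalently, when the identity operator on $X$ is $un$-Dunford-Pettis (by Proposition \ref{101}(iv); note that sums and scalar multiples of $un$-Dunford-Pettis operators are again $un$-Dunford-Pettis, so $DP_{un}(X)$ is precisely the set of such operators). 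It therefore suffices to prove that every norm-bounded $un$-null net in $X$ is norm-null if and only if $X$ has a strong unit.

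For sufficiency, let $e$ be a strong unit, so that there is $C>0$ with $|x|\le C\|x\|\,e$ for all $x\in X$. If $(x_\alpha)$ is norm-bounded, say $\|x_\alpha\|\le M$, and $x_\alpha\xrightarrow{un}0$, then from $|x_\alpha|\le CMe$ we get $|x_\alpha|=|x_\alpha|\wedge CMe\le CM\,(|x_\alpha|\wedge e)$, whence $\|x_\alpha\|\le CM\,\||x_\alpha|\wedge e\|\to 0$.

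For necessity I would argue contrapositively: assume $X$ has no strong unit. The crux is the claim that then
\[
\inf\{\,\|y\wedge w\|:\ y\in X_+,\ \|y\|=1\,\}=0\qquad\text{for every }w\in X_+ .
\]
Indeed, suppose this infimum were some $\varepsilon_w>0$ for a particular $w$; I would show $w$ is an order unit, which in a Banach lattice forces $w$ to be a strong unit, a contradiction. If $w$ were not an order unit, pick $z\in X_+$ outside the principal ideal generated by $w$, so that $u_n:=(z-nw)^+\neq 0$ for every $n$. From the identity $z=nw+(z-nw)^+-(nw-z)^+$ together with the subadditivity $x\wedge(a+b)\le x\wedge a+x\wedge b$ for positives, one gets the Riesz-space inequality $(z-nw)^+\wedge mw\le\frac{m}{n}z$ for every real $m>0$; applying it with $m=\lambda_n:=\|u_n\|$ gives $\|u_n\wedge\lambda_n w\|\le\frac{\lambda_n}{n}\|z\|$. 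On the other hand $u_n/\lambda_n$ is a positive unit vector, so $\|u_n\wedge\lambda_n w\|=\lambda_n\,\big\|(u_n/\lambda_n)\wedge w\big\|\ge\varepsilon_w\lambda_n$. Dividing by $\lambda_n>0$ forces $\varepsilon_w\le\|z\|/n$ for all $n$, which is absurd.

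Granting the displayed claim, I would finish by constructing the required net. Direct $\Lambda:=X_+\times\nn$ by $(w,n)\preceq(w',n')$ iff $w\le w'$ and $n\le n'$, and for each $(w,n)$ use the claim to choose $y_{(w,n)}\in X_+$ with $\|y_{(w,n)}\|=1$ and $\|y_{(w,n)}\wedge w\|<1/n$. Given $v\in X_+$ and $\varepsilon>0$, every index $(w,n)\succeq(v,N)$ with $N>1/\varepsilon$ satisfies $v\le w$, hence $\||y_{(w,n)}|\wedge v\|\le\|y_{(w,n)}\wedge w\|<1/n<\varepsilon$; thus $(y_{(w,n)})$ is a norm-bounded $un$-null net that is not norm-null, so $X$ fails to be boundedly unbounded and $DP_{un}(X)\neq B(X)$. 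The genuinely hard step is the displayed claim --- extracting from a uniform positive lower bound on $\|y\wedge w\|$ over positive unit vectors the conclusion that $w$ is an order unit; the two endpoints (sufficiency and the net construction) are routine once that is in place.
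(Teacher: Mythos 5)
Your argument is correct, and its first step --- reducing the statement to ``the identity is $un$-Dunford--Pettis, i.e.\ every norm-bounded $un$-null net is norm-null, if and only if $X$ has a strong unit'' --- is exactly the reduction the paper makes. The difference is in how that equivalence is established: the paper simply cites \cite{KMT} (Theorem~2.3 for the forward direction, Lemmas~2.1 and~2.2 for the converse), whereas you prove both halves from scratch. Your sufficiency argument (from $|x|\le C\|x\|e$ deduce $\|x_\alpha\|\le CM\,\||x_\alpha|\wedge e\|$) is the standard one, and your necessity argument is genuinely the harder content: the key inequality $(z-nw)^+\wedge mw\le\frac{m}{n}z$ is a valid Riesz-space inequality (it can be checked pointwise, or synthetically from the disjointness of $(z-nw)^+$ and $(nw-z)^+$, which forces $\bigl(\frac{n}{m}(z-nw)^+\wedge nw-z\bigr)^+=0$), and it correctly yields that any $w$ with $\inf\{\|y\wedge w\|:y\in X_+,\ \|y\|=1\}>0$ must be a strong unit; the net indexed by $X_+\times\nn$ then witnesses failure of the boundedly-unbounded property. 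What your route buys is self-containedness at the cost of length; what the paper's route buys is brevity, plus slightly more information in the forward direction (with a strong unit the $un$- and norm topologies agree globally, not merely on bounded sets --- though agreement on bounded sets is all that is needed here). Two cosmetic points: your phrase ``order unit, which \dots forces $w$ to be a strong unit'' is redundant since the two terms are synonymous, and in the sufficiency step one should take $CM\ge 1$ so that $|x_\alpha|\wedge CMe\le CM(|x_\alpha|\wedge e)$ applies; neither affects correctness.
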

\begin{proof}
Suppose $X$ has a strong unit. By \cite[Theorem 2.3]{KMT}, $un$-topology and norm topology agree on $X$, so $DP_{un}(X)=B(X)$. For the converse, assume that $DP_{un}(X)=B(X)$. Thus, the identity operator $I$ lies in $DP_{un}(X)$. Thus, every norm bounded $un$-null net is norm null. Using of \cite[Lemma 2.1 and Lemma 2.2]{KMT}, yields the desired result.
\end{proof}
\begin{thm}
$K_{bu\tau}(X)=B_b(X)$ if and only if $X$ is atomic and has both the Levi and Lebesgue properties.
\end{thm}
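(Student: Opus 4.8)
The plan is to reduce both directions to the single statement that $I_X$ (the identity of $X$) is $bu\tau$-compact, equivalently that every $\tau$-bounded subset of $X$ is $u\tau$-relatively compact. Indeed $I_X$ is $bb$-bounded, so $I_X\in B_b(X)$; if $K_{bu\tau}(X)=B_b(X)$ then $I_X=\sum_j\lambda_jT_j$ with each $T_j$ $bu\tau$-compact, hence $B\subseteq\sum_j\lambda_jT_j(B)$ for each $\tau$-bounded $B$, and since subsets, scalar multiples and finite sums of $u\tau$-relatively compact sets are $u\tau$-relatively compact, $B$ is $u\tau$-relatively compact. Conversely, if every $\tau$-bounded set is $u\tau$-relatively compact and $T\in B_b(X)$, then each $T(B)$ is $\tau$-bounded, hence $u\tau$-relatively compact, so $T$ is $bu\tau$-compact; together with $K_{bu\tau}(X)\subseteq B_b(X)$ this gives the equality. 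So it suffices to prove: every $\tau$-bounded set is $u\tau$-relatively compact if and only if $X$ is atomic with the Levi and Lebesgue properties.

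For the ``if'' direction, fix a maximal pairwise-disjoint family of atoms $(e_i)_{i\in I}$ and write $x^{(i)}e_i$ for the component of $x$ in the band $\mathbb{R}e_i$; these band projections exist (Levi) and are $\tau$-continuous because $|P_ix|\le|x|$. I would first record the sublemma that, for atomic $X$ with the Lebesgue property, $x_\alpha\xrightarrow{u\tau}0$ holds if and only if $x_\alpha^{(i)}\to0$ for every $i$: one direction tests $u\tau$-convergence against $e_i$, the other writes $w\in X_{+}$ as the order limit of its finite atomic truncations and invokes the Lebesgue property. Now let $B$ be $\tau$-bounded, solid by Lemma~\ref{301}(iii). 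Continuity of the $P_i$ confines the coordinates of $B$ to fixed bounded intervals, so by Tychonoff every net in $B$ has a subnet $(x_\beta)$ with $x_\beta^{(i)}\to c_i$ for all $i$. The Levi property then produces the limit in $X$: each finite partial sum $\sum_{i\in F}c_i^{+}e_i$ is the $\tau$-limit of $P_F(x_\beta^{+})$, which lies in the solid hull of $B$, so the increasing net $\bigl(\sum_{i\in F}c_i^{+}e_i\bigr)_F$ remains in the $\tau$-bounded set $\mathrm{cl}_\tau(\mathrm{sol}(B))$ and hence has a supremum $x^{+}$; likewise one gets $x^{-}$, and $x:=x^{+}-x^{-}$ has coordinates $(c_i)$, so by the sublemma $x_\beta\xrightarrow{u\tau}x$. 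The same reasoning applied to $\mathrm{cl}_{u\tau}(B)$ shows that closure is $u\tau$-compact, so $B$ is $u\tau$-relatively compact.

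For the ``only if'' direction, observe that every order interval $[0,u]$ is $\tau$-bounded and $u\tau$-closed (the maps $x\mapsto x^{-}$ and $x\mapsto(x-u)^{+}$ are $u\tau$-continuous), hence $u\tau$-compact. The Lebesgue property: if $x_\alpha\downarrow0$, any $u\tau$-cluster point $z$ of $(x_\alpha)$ lies in every $[0,x_{\alpha}]$, so $z=0$; thus a subnet is $u\tau$-null, and testing against $w=x_{\alpha_0}$ makes that subnet $\tau$-null; a standard contradiction argument (applied again to an offending subnet) upgrades this to $x_\alpha\xrightarrow{\tau}0$. The Levi property: for an increasing $\tau$-bounded net, a $u\tau$-cluster point is an upper bound, since $X_{+}$ is $u\tau$-closed, and it is dominated by every upper bound, hence it is the supremum. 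Atomicity is the real obstacle. If $X$ were not atomic I would pass to an atomless band containing some $u>0$ and build a Rademacher-type dyadic tree $\{v_\sigma\}_{\sigma\in2^{<\omega}}$ inside $[0,u]$ in which \emph{every} $v_\sigma$ avoids a fixed solid neighbourhood $V_0$ with $u\notin V_0$; the engine is a splitting lemma for atomless spaces — using the Lebesgue property and most comfortably when $\tau$ is Fatou — asserting that any $v\notin V_0$ splits into disjoint nonzero pieces both outside $V_0$, for otherwise a supremum-of-components argument manufactures an atom. Since each Rademacher difference satisfies $|r_m-r_n|\wedge u=|r_m-r_n|\ge v_\sigma$ for a suitable $\sigma$, solidity gives $|r_m-r_n|\wedge u\notin V_0$ for all $m\ne n$, so $\{r_n\}\subseteq[0,u]$ fails to be $u\tau$-totally bounded, contradicting $u\tau$-compactness of $[0,u]$.

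I expect this last step — making the Rademacher construction go through for a general (not necessarily Fatou) locally solid topology — to be the principal difficulty; it is likely cleanest either to carry the standing Fatou hypothesis of this section or to appeal to a known structural description of atomic locally solid vector lattices.
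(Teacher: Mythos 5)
Your opening reduction is exactly the one the paper uses: both directions come down to the identity operator being $bu\tau$-compact, i.e.\ to every $\tau$-bounded set being $u\tau$-relatively compact (the paper obtains the converse implication from Proposition \ref{101}(ii) rather than from your linear-combination argument, but since $K_{bu\tau}(X)$ is defined as the class of all $bu\tau$-compact operators these are the same observation). Where you diverge is that the paper then simply invokes \cite[Theorem 6]{DEM1}, which is precisely the equivalence ``every bounded set is $u\tau$-relatively compact if and only if $X$ is atomic with the Levi and Lebesgue properties,'' whereas you set out to reprove that characterization from scratch. Your sufficiency argument (coordinatewise compactness via Tychonoff, the Levi property to assemble the limit, the Lebesgue property to upgrade coordinatewise convergence to $u\tau$-convergence) and your derivations of the Lebesgue and Levi properties from $u\tau$-compactness of order intervals and of bounded increasing nets are sound in outline and track the proof of the cited result.

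The genuine gap is the necessity of atomicity. Your entire Rademacher construction hangs on the asserted ``splitting lemma'' --- that in an atomless band every $v\notin V_0$ splits into two disjoint nonzero pieces both outside $V_0$ --- and the one-line justification (``otherwise a supremum-of-components argument manufactures an atom'') does not go through: if every splitting of $v$ puts one piece inside $V_0$, one can extract a maximal disjoint family of nonzero components of $v$ lying in $V_0$ whose supremum is $v$ itself, but the supremum of infinitely many disjoint elements of a solid neighbourhood $V_0$ need not lie in $V_0$ (already the finite partial sums can escape any fixed neighbourhood), so no contradiction arises and no atom is produced. You concede yourself that this step is the principal unresolved difficulty, which is honest but means the theorem is not actually proved. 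The clean repair is the route the paper takes: quote \cite[Theorem 6]{DEM1} (or, for the normed case, the corresponding results of \cite{KMT}) for this equivalence instead of rebuilding the atomless splitting argument in an arbitrary locally solid topology.
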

\begin{proof}
Suppose $X$ is an atomic locally solid vector lattice with Levi and Lebesgue properties. Then by \cite[Theorem 6]{DEM1}, the identity operator is $bu\tau$-compact. So, by Proposition \ref{101}$(ii)$,  $K_{bu\tau}(X)=B_b(X)$. Now, suppose $K_{bu\tau}(X)=B_b(X)$. Therefore, the identity operator lies in $K_{bu\tau}(X)$. Therefore, every bounded subset of $X$ is $u\tau$-relatively compact. Again, using \cite[Theorem 6]{DEM1}, yields the desired result.
\end{proof}
\begin{coro}\label{500}
Suppose $X$ is a Banach lattice. Then, $K_{un}(X)=B(X)$ and $K_{uaw}(X)=B(X)$ if and only if $X$ is an atomic $KB$-space.
\end{coro}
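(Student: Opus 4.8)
The plan is to read the corollary off the theorem that immediately precedes it, applied once with $\tau$ the norm topology of $X$ and once with $\tau=|\sigma|(X,X^{*})$ the absolute weak topology. In each case the conclusion ``atomic, with the Levi and Lebesgue properties'' will be translated, for a Banach lattice, into ``atomic $KB$-space'', so that each of the equalities $K_{un}(X)=B(X)$ and $K_{uaw}(X)=B(X)$ is by itself equivalent to that property; the asserted equivalence for their conjunction then follows at once.

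For the $un$-part I would take $\tau$ to be the norm topology, so $u\tau=un$. Since a normed space is locally bounded, the classes $B_{n}(X),B_{c}(X),B_{b}(X)$ all coincide with $B(X)$, and likewise the $n$- and $b$-versions of $u\tau$-compactness coincide, so $K_{un}(X)=K_{bu\tau}(X)$. The preceding theorem then reads: $K_{un}(X)=B(X)$ if and only if $X$ is atomic and $(X,\|\cdot\|)$ has the Levi and Lebesgue properties. For a Banach lattice the Lebesgue property of the norm topology is exactly order continuity of the norm, and order continuity of the norm together with the Levi property is precisely what characterises a $KB$-space (see \cite{AB}). Hence $K_{un}(X)=B(X)$ if and only if $X$ is an atomic $KB$-space.

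For the $uaw$-part I would take $\tau=|\sigma|(X,X^{*})$; by definition $uaw$-convergence is $u\tau$-convergence for this $\tau$. First I would note that a subset of $X$ is $|\sigma|(X,X^{*})$-bounded iff it is weakly bounded iff it is norm bounded (uniform boundedness), so the $bb$-bounded operators for this topology are exactly the norm-bounded operators, $B_{b}(X,\tau)=B(X)$, and $K_{bu\tau}(X)=K_{uaw}(X)$. The preceding theorem then gives: $K_{uaw}(X)=B(X)$ iff $X$ is atomic and $(X,|\sigma|(X,X^{*}))$ has the Levi and Lebesgue properties. As the $|\sigma|(X,X^{*})$-bounded sets are exactly the norm-bounded ones, the Levi property here is just the (norm) Levi property. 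The Lebesgue property here says that $x_{\alpha}\downarrow 0$ forces $\phi(x_{\alpha})\to 0$ for every $\phi\in X^{*}_{+}$, i.e. every bounded linear functional on $X$ is order continuous, which is equivalent to $X$ having order continuous norm. Thus $K_{uaw}(X)=B(X)$ is again equivalent to $X$ being an atomic $KB$-space, and combining the two parts finishes the proof.

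The one genuinely delicate point I expect is the last equivalence in the $uaw$-part: that if every bounded functional on a Banach lattice is order continuous then the norm itself is order continuous. The converse is immediate; for the forward direction I would argue by contraposition. If the norm is not order continuous there is a net $x_{\alpha}\downarrow 0$ with $\|x_{\alpha}\|\ge\delta>0$; choosing positive norm-one functionals $\phi_{\alpha}$ with $\phi_{\alpha}(x_{\alpha})=\|x_{\alpha}\|$ and passing to a weak$^{*}$ cluster point $\phi$ of $(\phi_{\alpha})$ yields, since $\phi_{\alpha}(x_{\beta})\ge\phi_{\alpha}(x_{\alpha})\ge\delta$ whenever $\alpha\ge\beta$, a positive functional with $\phi(x_{\beta})\ge\delta$ for all $\beta$ — hence one that is not order continuous. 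Alternatively this equivalence can simply be quoted from \cite{AB}. Everything else is bookkeeping with the preceding theorem and standard facts about $KB$-spaces.
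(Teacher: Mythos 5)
Your derivation is correct and is essentially the route the paper intends: the corollary is stated without proof as an immediate specialization of the preceding theorem to $\tau$ the norm topology and $\tau=|\sigma|(X,X^{*})$, with ``atomic $+$ Levi $+$ Lebesgue'' translated into ``atomic $KB$-space'' in each case. The only point the paper leaves implicit --- that the Lebesgue property of $|\sigma|(X,X^{*})$ on a Banach lattice is equivalent to order continuity of the norm --- is handled correctly by your Dini/weak$^{*}$ cluster-point argument.
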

\begin{rem}
We do not know whether or not $K_{nu\tau}(X)=B_n(X)$, in general. A sufficient condition is that $X$ has the Heine-Borel property. However, this condition is not necessary as $X=\ell_1$ does not have the Heine-Borel property; nevertheless $K_{nu\tau}(X)=B_n(X)$ by Corollary \ref{500}.
\end{rem}

Now, we investigate whether unbounded Dunford-Pettis operators or unbounded compact ones are topologically closed with respect to the induced topologies from corresponding classes of bounded operators. Also, we consider order closedness property for them.

The class of all $nu\tau$-compact ( $bu\tau$-compact) operators are not closed in the corresponding class of bounded operators, respectively, in spite of this fact that sequentially $un$-compact operators are closed; see \cite[Proposition 9.2]{KMT}. Consider the following example. In addition, these spaces are not order closed; for sequentially $un$-compact operators, it is shown in \cite[Example 9.5]{KMT}.
\begin{exam}
Assume that $X$ is $c_0$ with the norm topology and $Y$ is $c_0$ with the weak topology. Let $P_n$ be the projection on the first $n$-components from $X$ into $Y$. Each $P_n$ is compact in both unbounded senses. In addition, $(P_n)$ converges uniformly on the unit ball to the identity operator $I$ from $X$ into $Y$. $I$ is neither $nu\tau$-compact nor $bu\tau$-compact since the sequence $(u_n)$ defined via $n$ one terms at first and null in the sequel is neither norm convergent nor weak convergent in $c_0$ by Dini Theorem \cite[Theorem 3.52]{AB}. Since $c_0$ is boundedly unbounded, it is not also $uaw$-convergent. Finally, note that $P_n\uparrow I$, we conclude that these classes of compact operators are not order closed.
\end{exam}
\begin{prop}
Let $X$ be a locally solid vector lattice. Then $DP_{u\tau}(X)$ is a closed subalgebra of $B_{c}(X)$.
\end{prop}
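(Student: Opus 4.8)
The plan is to check three things in turn: that $DP_{u\tau}(X)$ sits inside $B_{c}(X)$, that it is stable under composition, and that it is closed for the equicontinuous convergence topology. For the containment I would invoke the standing mild assumption recalled above — that every $\tau$-convergent net is eventually $\tau$-bounded — under which every $u\tau$-Dunford-Pettis operator is continuous; hence $DP_{u\tau}(X)\subseteq B_{c}(X)$. It is convenient to note first that the collection of $u\tau$-Dunford-Pettis operators is already a linear subspace: if $S_{1}(x_{\alpha})\xrightarrow{\tau}0$ and $S_{2}(x_{\alpha})\xrightarrow{\tau}0$ along a $\tau$-bounded $u\tau$-null net $(x_{\alpha})$, then so does $(aS_{1}+bS_{2})(x_{\alpha})$ for any scalars $a,b$, since $\tau$ is a linear topology. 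Thus $DP_{u\tau}(X)$ coincides with the set of all $u\tau$-Dunford-Pettis operators on $X$, which streamlines the remaining steps. Stability under composition is then immediate from Proposition~\ref{101}$(iv)$: if $S,T\in DP_{u\tau}(X)$, then $T$ is continuous and $S$ is $u\tau$-Dunford-Pettis, so $TS$ is $u\tau$-Dunford-Pettis, that is, $TS\in DP_{u\tau}(X)$. Combined with linearity, this shows $DP_{u\tau}(X)$ is a subalgebra of $B_{c}(X)$.

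The substance lies in closedness. Let $(T_{\alpha})$ be a net in $DP_{u\tau}(X)$ converging equicontinuously to some $T\in B_{c}(X)$; the goal is to show $T$ is $u\tau$-Dunford-Pettis. Fix a $\tau$-bounded net $(x_{\beta})$ in $X$ with $x_{\beta}\xrightarrow{u\tau}0$ and an arbitrary zero neighborhood $W$, and choose a balanced zero neighborhood $V$ with $V+V\subseteq W$. Applying the definition of equicontinuous convergence to the net $(T_{\alpha}-T)$, there is a zero neighborhood $U$ such that for every $\varepsilon>0$ there is an index $\alpha_{\varepsilon}$ with $(T_{\alpha}-T)(U)\subseteq\varepsilon V$ for all $\alpha\geq\alpha_{\varepsilon}$. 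Since $(x_{\beta})$ is $\tau$-bounded, there is $\lambda>0$ with $x_{\beta}\in\lambda U$ for every $\beta$; taking $\varepsilon=1/\lambda$ and putting $\alpha^{*}=\alpha_{1/\lambda}$, linearity of $T_{\alpha^{*}}-T$ gives $(T_{\alpha^{*}}-T)(x_{\beta})\in(T_{\alpha^{*}}-T)(\lambda U)=\lambda\,(T_{\alpha^{*}}-T)(U)\subseteq V$ for every $\beta$. Now $T_{\alpha^{*}}\in DP_{u\tau}(X)$, so $T_{\alpha^{*}}(x_{\beta})\xrightarrow{\tau}0$, and there is $\beta_{0}$ with $T_{\alpha^{*}}(x_{\beta})\in V$ for all $\beta\geq\beta_{0}$. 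Consequently, for $\beta\geq\beta_{0}$,
\[
T(x_{\beta})=\bigl(T-T_{\alpha^{*}}\bigr)(x_{\beta})+T_{\alpha^{*}}(x_{\beta})\in V+V\subseteq W,
\]
where we used that $V$ is balanced, so $-V=V$. Hence $T(x_{\beta})\xrightarrow{\tau}0$, the operator $T$ is $u\tau$-Dunford-Pettis, and therefore $DP_{u\tau}(X)$ is closed in $B_{c}(X)$.

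I expect the only genuinely delicate point to be this last passage: equicontinuous convergence of $(T_{\alpha})$ only furnishes uniform control of $(T_{\alpha}-T)$ on a single fixed zero neighborhood $U$, and one must convert this into control along the whole net $(x_{\beta})$. This is precisely where the $\tau$-boundedness hypothesis built into the definition of a $u\tau$-Dunford-Pettis operator is essential: it lets one absorb $(x_{\beta})$ into a scalar dilate of $U$. Dropping that boundedness requirement would break the estimate, which is consistent with the Example above showing that the analogous closedness fails for $nu\tau$- and $bu\tau$-compact operators.
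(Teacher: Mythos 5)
Your proposal is correct and follows essentially the same route as the paper's own proof: the closedness argument via a zero neighborhood $W\supseteq V+V$, the uniform estimate $(T_{\alpha}-T)(U)\subseteq\varepsilon V$ from equicontinuous convergence, absorbing the $\tau$-bounded net into $\lambda U$ with $\varepsilon=1/\lambda$, and then splitting $T(x_\beta)$ through a fixed $T_{\alpha^*}$ is exactly the paper's argument, only written out more carefully (e.g.\ your explicit use of a balanced $V$ and your verification of the containment and algebra structure, which the paper leaves as remarks).
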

\begin{proof}
One can see easily that $DP_{u\tau}(X)$ is an algebra. Suppose $(S_{\alpha})$ is a net of $u\tau$-Dunford-Pettis operators which is uniformly convergent equicontinuously to the continuous operator $S$. Choose zero neighborhood $W\subseteq X$, arbitrary. There is a zero neighborhood $V\subseteq X$ with $V+V\subseteq W$. Choose zero neighborhood $U\subseteq X$ such that for each $\varepsilon>0$ there is an index $\alpha_0$ with $(S_{\alpha}-S)(U)\subseteq \varepsilon V$ for each $\alpha\geq\alpha_0$ so that $S(U)\subseteq S_{\alpha}(U)+\varepsilon V$. Assume $(x_{\beta})$ is a bounded $u\tau$-null net in $X$. Find positive scalar $\gamma$ with $(x_{\beta})\subseteq \gamma U$. Corresponding to $\varepsilon=\frac{1}{\gamma}$, we have $(S_{\alpha}-S)(U)\subseteq \frac{1}{\gamma} V$ for sufficiently large $\alpha$ so that $(S_{\alpha}-S)(x_{\beta})\subseteq V$. Fix an $\alpha$. There exists a $\beta_0$ with $S_{\alpha}(x_{\beta})\subseteq V$ for each $\beta\geq\beta_0$. This concludes that $S(x_{\beta})\subseteq W$ for sufficiently large $\beta$.
\end{proof}
Note that we have seen in \cite[Example 8]{EGZ} unbounded absolute weak Dunford-Pettis operators are not order closed, in general.

In view of \cite[Proposition 6]{EGZ}, we have a Kantorovich-like extension for unbounded Dunford-Pettis operators.
\begin{prop} Let $T\colon X\rightarrow Y$ be a positive $u\tau$-Dunford-Pettis operator between locally solid vector lattices with $Y$ Dedekind complete. Then the Kantorovich-like extension $S\colon X\rightarrow Y$ defined via
\begin{center}
$S(y)=\sup \left\{ T(y\wedge y_{\alpha})\colon  (y_{\alpha})\subseteq X_{+},  y_{\alpha}\xrightarrow{u\tau} 0\right\}$
\end{center}
for every $y\in X_{+}$ is again $u\tau$-Dunford-Pettis.
\end{prop}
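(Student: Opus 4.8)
The plan is to pin $S$ between $0$ and $T$ on the positive cone of $X$ and then to transport the $u\tau$-Dunford-Pettis property of $T$ across this estimate; here one uses that $Y$, being a Dedekind complete locally solid vector lattice, carries a locally solid topology $\tau'$.

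First I would read off the decisive inequality directly from the defining supremum. Fix $y\in X_{+}$. For every net $(y_{\alpha})\subseteq X_{+}$ with $y_{\alpha}\xrightarrow{u\tau}0$ one has $0\le y\wedge y_{\alpha}\le y$, hence $0\le T(y\wedge y_{\alpha})\le T(y)$ since $T\ge 0$; passing to the supremum over all admissible nets gives
\[
0\le S(y)\le T(y)\qquad(y\in X_{+}).
\]
That this supremum exists and that the resulting map is additive on $X_{+}$, so that it extends to a positive operator $S\colon X\to Y$, is the Kantorovich-type content behind the statement (cf.\ \cite[Proposition 6]{EGZ}); I would quote it rather than reprove it. Positivity of $S$ then yields $|S(x)|\le S(|x|)\le T(|x|)$ for every $x\in X$.

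Now let $(x_{\beta})$ be a $\tau$-bounded net in $X$ with $x_{\beta}\xrightarrow{u\tau}0$; the goal is $S(x_{\beta})\xrightarrow{\tau'}0$. The net $(|x_{\beta}|)$ lies in the solid hull of $\{x_{\beta}:\beta\}$, which is $\tau$-bounded because $\tau$ is locally solid, so $(|x_{\beta}|)$ is $\tau$-bounded. It is also $u\tau$-null: by definition $x_{\beta}\xrightarrow{u\tau}0$ means $|x_{\beta}|\wedge w\xrightarrow{\tau}0$ for all $w\in X_{+}$, and that condition is literally the $u\tau$-nullity of $(|x_{\beta}|)$. Since $T$ is $u\tau$-Dunford-Pettis, $T(|x_{\beta}|)\xrightarrow{\tau'}0$. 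Combining with the previous step, $0\le|S(x_{\beta})|\le T(|x_{\beta}|)$, and since $\tau'$ is locally solid this squeeze forces $|S(x_{\beta})|\xrightarrow{\tau'}0$, hence $S(x_{\beta})\xrightarrow{\tau'}0$. Therefore $S$ is $u\tau$-Dunford-Pettis, as asserted.

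The argument has essentially no obstacle beyond isolating the domination $S\le T$ on $X_{+}$; once that observation is made, the remainder is the routine solid-squeeze, together with the standard facts that $\tau$-boundedness and $u\tau$-nullity are inherited by $(|x_{\beta}|)$ and that a positive operator satisfies $|S(x)|\le S(|x|)$. The one ingredient I would import rather than establish is the well-definedness and positivity of $S$, coming from the Kantorovich-like construction.
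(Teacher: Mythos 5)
Your proposal is correct and takes essentially the same route as the paper: both arguments rest on the domination $0\le S(y)\le T(y)$ for $y\in X_{+}$ (in the paper this appears as $S(y_{\alpha})=\sup_{\beta}T(y_{\alpha}\wedge b_{\beta})\le T(y_{\alpha})\to 0$) combined with the $u\tau$-Dunford-Pettis property of $T$. The only difference is presentational: you outsource the additivity/Kantorovich step to \cite[Proposition 6]{EGZ} while the paper proves it, and you justify the reduction to positive nets explicitly via $|S(x_{\beta})|\le S(|x_{\beta}|)\le T(|x_{\beta}|)$ and the solid squeeze, where the paper merely asserts that one may assume the net is positive.
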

\begin{proof}
Suppose $y,z\in X_{+}$. Then
\begin{center}
$S(y+z)=\sup_{\beta} \{T((y+z)\wedge \gamma_{\beta})\}\le \sup_{\beta}\{T(y\wedge \gamma_{\beta}\}+\sup_{\beta}\{T(z\wedge \gamma_{\beta})\}\le S(y)+S(z)$,
\end{center}
in which, $(\gamma_{\beta})$ is a positive net that is $u\tau$-null. On the other hand,
\[T(y\wedge a_{\alpha})+T(z\wedge b_{\beta})=T(y\wedge a_{\alpha}+z\wedge b_{\beta})\leq T((y+z)\wedge (a_{\alpha}+b_{\beta}))\leq S(y+z),\]
provided that two positive nets $(a_{\alpha}),(b_\beta)$ are $u\tau$-null so that $S(y)+S(z)\leq S(y+z)$. Therefore, by the Kantorovich extension Theorem \cite[Theorem 1.10]{AB}, $S$ extends to a positive operator. Denote by $S$ the extended operator $S\colon X\rightarrow Y.$

We show that $S$ is also $u\tau$-Dunford-Pettis. Suppose bounded net $(y_{\alpha})\subseteq X_{+}$ is $u\tau$-null. Note that  we can always assume that the net $(y_{\alpha})$ is positive. Therefore, we have
\[S(y_{\alpha})=\sup_{\beta}T(y_{\alpha}\wedge b_{\beta})\leq T(y_{\alpha})\rightarrow 0,\]
in which $(b_{\beta})$ is a positive net in $X$ which is convergent to zero in the $u\tau$-topology.
\end{proof}
\begin{rem}
One can see directly that if a positive operator $T$ is dominated by a positive $u\tau$-Dunford-Pettis operator $S$, then $T$ is also $u\tau$-Dunford-Pettis. This does not hold for unbounded compact operators as shown by \cite[Example 9.7]{KMT}. So, from former statement, we conclude that if $T$ and $S$ are two positive $u\tau$-Dunford-Pettis operators, so is $T\vee S$.
\end{rem}

\end{document}